\documentclass[11pt]{article}
\usepackage[margin=1.1in]{geometry}
\usepackage{amsmath,amssymb,amsthm}
\usepackage{booktabs}
\usepackage[colorlinks=true,linkcolor=blue,citecolor=blue,urlcolor=blue]{hyperref}

\newtheorem{theorem}{Theorem}[section]
\newtheorem{lemma}[theorem]{Lemma}
\newtheorem{corollary}[theorem]{Corollary}

\theoremstyle{remark}
\newtheorem{remark}[theorem]{Remark}

\newcommand{\NA}{N_A}
\title{Exact $6$-cut rigidity and small-order superconnectivity\\
for the $6$-regular case of Dirac's $k=4$ problem}
\author{Alper Ferudun\thanks{\texttt{alper@mercurycodelab.com}. Code, certificates
and machine-checked Lean files accompany this note as ancillary files; development
history at \texttt{github.com/AlperTheKing}.}}
\date{June 2026}

\begin{document}
\maketitle

\begin{abstract}
Dirac asked in 1970 whether for every $k\ge 4$ there is a $k$-vertex-critical graph
without critical edges; Jensen settled all $k\ge 5$, and only $k=4$ remains open.
Following Skottova and Steiner, call a graph $G$ a \emph{$(4,1)$-graph} if
$\chi(G)=4$, $\chi(G-v)=3$ for every vertex $v$, and $\chi(G-e)=4$ for every edge
$e$; they proved $\delta(G)\ge 6$ and $\lambda(G)\ge 6$ for every $(4,1)$-graph and
asked whether a $6$-regular $(4,1)$-graph exists. We prove three results about this
$6$-regular case. \textbf{Theorem A} (computational): there is no
$6$-regular $4$-vertex-critical graph on $n\le 15$ vertices, except for a unique
graph (up to isomorphism) on $n=13$ vertices, whose $13$ critical edges form a
Hamilton cycle. Consequently any $6$-regular $(4,1)$-graph has at least $16$
vertices.
\textbf{Theorem B}: in a $6$-regular $(4,1)$-graph, every $6$-edge-cut is either the
edge star of a vertex or has both shores of size at least $15$; consequently every
$6$-regular $(4,1)$-graph on at most $29$ vertices is super-$6$-edge-connected (every
$6$-edge-cut is the edge star of a single vertex).
\textbf{Theorem C} (all sizes): no shore of a nontrivial $6$-edge-cut in a
$6$-regular $(4,1)$-graph induces a bipartite graph; more generally, a shore whose
deficiency is concentrated on two vertices forces them to receive equal colours in
every proper $3$-colouring. The proof plays the cut-matrix classification against
vertex-criticality and is sharp: the incidence graph of $\mathrm{PG}(2,5)$ yields
deficiency-$6$ candidates that pass all one-sided counting filters.
The proof of Theorem~B combines an exact classification of the $3\times 3$ cut
matrices of $6$-edge-cuts in $(4,1)$-graphs (exactly $21$ matrices, five types up to
row/column permutations) with four local necessary conditions on a hypothetical
shore, the strongest being a \emph{boundary-shortfall lemma} derived from
vertex-deleted colourings; a finite, exhaustively enumerated candidate space then
collapses. The unique near-miss is $K_{3,3,3}$ minus a rainbow $3$-matching (one edge
between each pair of parts, six distinct endpoints), which survives all counting
filters and is excluded by the boundary-shortfall lemma alone. Several supporting
lemmas are machine-checked in Lean~4/Mathlib (no \texttt{native\_decide}); the
shore-exclusion counts are independently reproduced in a separate Python
implementation for cut sizes $a\le 11$.
\end{abstract}

\section{Introduction}

A graph $G$ is \emph{$k$-vertex-critical} if $\chi(G)=k$ and $\chi(G-v)=k-1$ for
every vertex $v$. An edge set $R\subseteq E(G)$ is \emph{critical} if
$\chi(G-R)<\chi(G)$. Dirac (see Erd\H{o}s \cite{Er89}) asked in 1970 whether for
every $k\ge 4$ there exists a $k$-vertex-critical graph with no critical edge.
Jensen \cite{Je02} resolved all cases $k\ge 5$; the case $k=4$ remains open, the
only prior partial result being that of Jensen and Siggers \cite{JS12}, who
constructed $4$-vertex-critical graphs in which fewer than an $\varepsilon$-fraction
of the edges are critical. The problem is \#944 in Bloom's database of Erd\H{o}s
problems \cite{Bl944}.

Following Skottova and Steiner \cite{SkSt25} we call a graph $G$ a
\emph{$(4,1)$-graph} (for short, a \emph{target}) if
\[
\chi(G)=4,\qquad \chi(G-v)=3\ \ \forall v\in V(G),\qquad \chi(G-e)=4\ \
\forall e\in E(G).
\]
Skottova and Steiner proved that every $(4,1)$-graph satisfies $\delta(G)\ge 6$
and $\lambda(G)\ge 6$ (their Proposition~5.1) and posed as an explicit subproblem
(their Problem~5.2) whether a \emph{$6$-regular} $(4,1)$-graph exists --- the
extremal, sparsest possible case. This note contributes three verified results on
that subproblem: Theorems~A and~B below, and Theorem~C (a structural exclusion
valid at every size) in Section~\ref{sec:bipartite}.

\begin{theorem}[A]\label{thm:A}
There is no $6$-regular $4$-vertex-critical graph on $n\le 15$ vertices, except
for a unique graph $G_{13}$ (up to isomorphism) on $n=13$ vertices, given in
graph6 form in Section~\ref{sec:computations}; its $13$ critical edges form a
Hamilton cycle. Consequently every $6$-regular $(4,1)$-graph has at least $16$
vertices.
\end{theorem}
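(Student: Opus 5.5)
The plan is a computer-assisted exhaustive enumeration, with a few cheap theoretical reductions to cut the search. First, $n$ must be even or odd as allowed by $6$-regularity: $6n/2$ edges is always an integer, so every $n\ge 7$ is possible. Next we use the standard facts about $4$-vertex-critical graphs. Such a graph is connected; it has minimum degree at least $3$, which is automatic here; and it has no clique cutset, in particular no cut vertex. By Brooks' theorem, $\chi\le 6$ holds trivially, but we need $\chi=4$, which excludes $K_7$ at $n=7$ (where $\chi=7$). We also exclude any graph containing $K_4$ other than $K_4$ itself, since a $4$-critical graph properly containing $K_4$ cannot exist. So we may restrict to connected $6$-regular $K_4$-free graphs on $8\le n\le 15$ vertices. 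We will generate these with \texttt{genreg} (or \texttt{nauty geng -d6 -D6 -c}, with $K_4$-freeness pruned during generation where possible). For $n\le 11$ the counts are small. For $n\le 15$ the $6$-regular connected graphs number in the billions at $n=15$, so we will generate directly with a $K_4$-free restriction, which cuts the space drastically.

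For each generated graph we run a fast test. First we check $3$-colourability with a DSATUR/backtracking solver and discard the $3$-colourable graphs. For the survivors we test $\chi(G-v)=3$ for every $v$. It is enough to test one vertex per orbit of the automorphism group, computed by nauty. Any graph passing both tests is $4$-vertex-critical. We expect only $G_{13}$ to survive. For it we then compute, for each edge $e$, whether $\chi(G_{13}-e)=3$. We record the $13$ critical edges and check that they form a Hamilton cycle. Uniqueness up to isomorphism follows from canonical labelling of the survivors.

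The consequence is immediate. A $(4,1)$-graph is $4$-vertex-critical with no critical edge, and $G_{13}$ has critical edges. So no $6$-regular $(4,1)$-graph has $n\le15$, and hence $n\ge16$.

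The main obstacle is the size of the $n=14,15$ enumeration. I would first try generating $K_4$-free $6$-regular graphs with \texttt{genreg}'s girth-free mode plus a $K_4$ prune hook, parallelised with the res/mod splitting. If that is too slow, I would instead enumerate $4$-vertex-critical graphs directly by a critical-graph generator: augmenting from colourings, and requiring that every vertex deletion be $3$-colourable, checked incrementally. Either way I would add an independent cross-check by a second implementation, for example a SAT-based colourability test, so that the certificates can be trusted.
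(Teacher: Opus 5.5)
Your plan is correct and is essentially the paper's proof: an exhaustive \texttt{geng} enumeration of $6$-regular graphs on $n\le 15$ vertices, followed by a $3$-colourability classifier that separates $3$-colourable graphs, non-vertex-critical graphs, vertex-critical graphs with a critical edge, and targets, and then a computation of the critical edges of the unique survivor $G_{13}$. Your $K_4$-freeness restriction is the triangle case of the paper's stronger neighbourhood-bipartiteness prefilter (Lemma~\ref{lem:nbhd}, which also rules out induced $5$-wheels); the paper applies that prefilter only at $n=15$ and otherwise runs the unrestricted enumeration so that totals can be matched against the known counts of $6$-regular graphs, a guard you lose by pruning at generation time and would replace with your proposed independent cross-check.
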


A graph is \emph{super-$6$-edge-connected} if it is $6$-edge-connected and every
$6$-edge-cut is \emph{trivial}, that is, the set of edges incident to a single
vertex (the edge star of that vertex).

\begin{theorem}[B]\label{thm:B}
In every $6$-regular $(4,1)$-graph, every $6$-edge-cut is either the edge star of a
single vertex or has both shores of size at least $15$. Consequently every
$6$-regular $(4,1)$-graph on at most $29$ vertices is super-$6$-edge-connected.
\end{theorem}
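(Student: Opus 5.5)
The plan follows the architecture announced in the abstract. Let $G$ be a $6$-regular $(4,1)$-graph and let $\partial(A)$ be a $6$-edge-cut with shore $A$, $|A|=a$, complement $B$. Since $G$ is $6$-regular, $6a-2|E(G[A])|=6$, so $G[A]$ has $3a-3$ edges. The \emph{deficiency} $6-\deg_A(v)$ summed over $A$ equals $6$.

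\textbf{Step 1: cut-matrix classification.} Fix a $3$-colouring of $G-e$ for each $e$ in the cut. By edge-noncriticality, each such colouring makes $e$ monochromatic. Restrict the colouring to each shore and record the $3\times3$ matrix $M$ whose $(i,j)$ entry counts cut edges from colour $i$ on the $A$-side to colour $j$ on the $B$-side. Permuting colours on one side leaves $G[A]$ and $G[B]$ properly coloured, so the key constraint is this: for every permutation $\pi$, the relabelled matrix must have a nonzero diagonal, since otherwise $G$ would be $3$-colourable. Enumerating nonnegative integer $3\times3$ matrices with total $6$ that meet this constraint, together with the requirement that deleting the single edge $e$ removes the only monochromatic cut edge, gives the list of $21$ matrices (five types). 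I would treat this as a finite, machine-checkable lemma.

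\textbf{Step 2: local necessary conditions on a small shore.} Assume $2\le a\le 14$. Small $a$ is immediate: $3a-3\le\binom a2$ forces $a\ge 6$ (or $a=1$, the trivial case). I would then derive constraints that any shore must satisfy:
\begin{itemize}
\item[(i)] $G[A]$ is $3$-colourable, and the set of boundary colour patterns it realises must include the $A$-rows of the classified matrices for every cut edge.
\item[(ii)] For each vertex $v\in A$, the graph $G-v$ is $3$-colourable while $G$ is not. This yields the \emph{boundary-shortfall lemma}: the patterns realisable on $A-v$ must be incompatible with every pattern realisable by $B$, so $A$ cannot be too ``flexible'' at the boundary.
\item[(iii)] Degree conditions: the deficiency vector has sum $6$, every vertex has degree at most $6$, and $\delta\ge 6$ holds in $G$.
\item[(iv)] Edge-criticality inside $A$: deleting any internal edge must not allow a colouring matching a complementary pattern.
\end{itemize}

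\textbf{Step 3: exhaustive enumeration.} For each $6\le a\le 14$, enumerate up to isomorphism all graphs on $a$ vertices with $3a-3$ edges, maximum degree $6$, and deficiency $6$, using nauty/geng with degree bounds. Apply the filters (i), (iii) and (iv) first, since they are cheap counting checks; survivors such as $K_{3,3,3}$ minus a rainbow $3$-matching are then killed by (ii). Because every shore has size at least $15$ by symmetry and $n\ge 30$ whenever the cut is nontrivial, the corollary for $n\le 29$ follows. I expect the main obstacle to be twofold: proving the boundary-shortfall lemma in a form strong enough to exclude the near-miss, and keeping the enumeration at $a=13,14$ tractable. For the latter I would first prune by the colour-pattern condition, generating shores vertex by vertex with canonical augmentation.
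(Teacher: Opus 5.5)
Your outline has the same architecture as the paper: cut-matrix classification, one-sided shore filters, and exhaustive enumeration up to $a=14$. However, the two lemmas that make the filters work are either derived from a false premise or missing.

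\textbf{Step 1.} You fix a $3$-colouring of $G-e$ for each cut edge $e$. In a $(4,1)$-graph $\chi(G-e)=4$, so no such colouring exists. Consequently the second half of your filter~(i), which asks the realised patterns to \emph{include} rows coming from these colourings, is vacuous.

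The classification must instead be proved for an \emph{arbitrary} pair of $3$-colourings $\alpha$ of $G[A]$ and $\beta$ of $G[B]$. These exist by vertex-criticality, since $G[A]\subseteq G-x$ for any $x\in B$. Then $D_\pi(M)=0$ would $3$-colour $G$, and $D_\pi(M)=1$ would make the unique monochromatic cut edge critical, so $D_\pi\ge 2$. Each cut edge lies on exactly two of the six permutation diagonals, so the average of $D_\pi$ is $|F|/3=2$, forcing $D_\pi=2$ for every $\pi$.

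Your ``nonzero diagonal'' condition only gives $D_\pi\ge 1$. That admits matrices outside the list, e.g.\ $\left(\begin{smallmatrix}2&1&1\\0&1&0\\0&0&1\end{smallmatrix}\right)$, which has $D_{\mathrm{id}}=4$. Also, because the classification holds for \emph{every} $\alpha$, the usable filter runs the other way from your (i): every $3$-colouring of $H$ must give a row-sum vector that is a permutation of $(6,0,0)$, $(3,3,0)$, $(4,1,1)$ or $(2,2,2)$.

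\textbf{Boundary-shortfall lemma.} This is the more serious gap. Your (ii) asserts that patterns on $A-v$ must be incompatible with every pattern on $B$. But $G-v$ \emph{is} $3$-colourable, so this is false as written. In any case it is not a condition you can test on $H$ alone.

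The missing idea is the local form of edge-noncriticality. Suppose that in a $3$-colouring of $G-v$ some colour occurs at most once on $N_G(v)$. Then either $v$ can be coloured, giving $\chi(G)\le 3$, or the edge from $v$ to that unique neighbour is critical. Hence in a $6$-regular target every $3$-colouring of $G-v$ splits $N_G(v)$ exactly $2+2+2$. Restricting to $A-v$, at most $b(v)$ of the required occurrences can come from outside $A$. This yields a one-sided test: some $3$-colouring $\psi$ of $H-v$ satisfies $\sum_i\max(0,2-c_i(\psi))\le b(v)$, with $c_i$ counted on $N_A(v)$.

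This is a \emph{flexibility} requirement, the opposite of your intuition that ``$A$ cannot be too flexible''. It is exactly what excludes $K_{3,3,3}$ minus a rainbow $3$-matching. At a full-degree vertex $v$ of that graph, no colouring of $H-v$ splits $N(v)$ as $2+2+2$, yet the graph passes the row-sum and other counting filters. Without this lemma your enumeration already fails to collapse at $a=9$.

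Your filter~(iv) is not one-sided either, and it is not needed. The correct shortfall test also subsumes both the comparable-neighbourhood exclusion and the $a=8$ case $K_{3,3,2}$.
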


Theorem~B is obtained by an \emph{exact finite obstruction method}: the equality
case of the Skottova--Steiner cut bound forces every $6$-edge-cut of a target to
carry one of exactly $21$ cut matrices (five types up to row and column
permutations, Theorem~\ref{thm:cutmatrix}); this, together with three further local
necessary conditions --- the strongest being the boundary-shortfall lemma
(Lemma~\ref{lem:shortfall}) --- confines a hypothetical small shore to a finite,
explicitly enumerable family which then collapses entirely. We consider the method
itself, and the structural near-miss it uncovers ($K_{3,3,3}$ minus a rainbow
$3$-matching, Section~\ref{sec:nine}), to be of independent interest.

We are careful to separate known from new. The bounds $\delta\ge 6$, $\lambda\ge 6$
and the cut-averaging argument are due to Skottova and Steiner \cite{SkSt25};
the exclusion of comparable nonadjacent vertices in vertex-critical graphs is
folklore. The equality-case classification (Theorem~\ref{thm:cutmatrix}), the
boundary-shortfall lemma, the shore-exclusion theorems, and the computational
closure of $n\le 15$ appear to be new. None of this comes close to resolving
Problem~5.2: a target could still be a super-$6$-edge-connected $6$-regular graph
on $n\ge 15$ vertices, and our results do not constrain that core case.

Our computations are guarded as follows (see Section~\ref{sec:computations}). Every
enumeration uses \texttt{geng} with a generator-side completeness check; the
Theorem~A counts match the known numbers of $6$-regular graphs, and the prefiltered
and unfiltered classifiers agree on all of $n\le 13$. The Theorem~B shore-exclusion
battery is implemented independently in C\texttt{++} and Python with identical
counts for cut sizes $a\le 11$; the larger cases rely on a single C\texttt{++}
classifier with the generator-side completeness guard. Several supporting lemmas are
machine-checked in Lean~4/Mathlib. Full code, certificates, and Lean files are
included as ancillary files (we recommend these over the development repository,
whose contents may change).

\section{Local colouring lemmas}\label{sec:local}

Throughout, colourings are proper maps to $\{1,2,3\}$ (not counted up to colour
permutation). For $v\in V(G)$ and a $3$-colouring $\varphi$ of $G-v$ write
$a_i(v,\varphi)=|\{u\in N(v):\varphi(u)=i\}|$.

\begin{lemma}[singleton lemma]\label{lem:singleton}
Let $G$ be $4$-vertex-critical, $v\in V(G)$, and $\varphi$ a $3$-colouring of
$G-v$. Then $a_i(v,\varphi)\ge 1$ for every colour $i$; and if $a_i(v,\varphi)=1$,
say with unique colour-$i$ neighbour $u$, then the edge $vu$ is critical:
$\chi(G-vu)=3$.
\end{lemma}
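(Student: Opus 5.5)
Both parts follow directly from the definitions of $4$-vertex-criticality and of a critical edge, by extending $\varphi$ to $v$.

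\emph{First claim.} Suppose, for contradiction, that $a_i(v,\varphi)=0$ for some colour $i$. Then no neighbour of $v$ has colour $i$ under $\varphi$. Setting $\varphi(v)=i$ therefore gives a proper $3$-colouring of $G$, contradicting $\chi(G)=4$. Hence every colour $i\in\{1,2,3\}$ occurs in $N(v)$.

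\emph{Second claim.} Suppose $a_i(v,\varphi)=1$, and let $u$ be the unique colour-$i$ neighbour of $v$. Consider $G-vu$ and extend $\varphi$ by $\varphi(v)=i$. This is proper on $G-vu$:
\begin{itemize}
\item every edge not incident to $v$ is an edge of $G-v$, so it is properly coloured by $\varphi$;
\item every edge $vw$ with $w\ne u$ has $\varphi(w)\ne i$, because $u$ is the only colour-$i$ neighbour of $v$.
\end{itemize}
So $\chi(G-vu)\le 3$.

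For the reverse inequality, note that $G-v$ is a subgraph of $G-vu$ (the deleted edge is incident to $v$). Vertex-criticality gives $\chi(G-v)=3$, so $\chi(G-vu)\ge 3$. Therefore $\chi(G-vu)=3<4=\chi(G)$, which says exactly that $vu$ is a critical edge.

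The only point needing care is this lower bound. It uses $\chi(G-v)=3$ from $4$-vertex-criticality, together with the fact that the equality $\chi(G-vu)=3$ as stated is stronger than the mere criticality inequality $\chi(G-vu)<4$. Everything else is immediate from the definitions.
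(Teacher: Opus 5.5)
Your proof is correct and follows the paper's argument: you extend $\varphi$ to $v$ with the missing colour to contradict $\chi(G)=4$, and you extend it with colour $i$ to 3-colour $G-vu$. You also add the lower bound $\chi(G-vu)\ge\chi(G-v)=3$, which the paper leaves implicit; this properly justifies the stated equality rather than just $\chi(G-vu)\le 3$.
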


\begin{proof}
If $a_i=0$, extend $\varphi$ by $\varphi(v)=i$; this $3$-colours $G$,
contradicting $\chi(G)=4$. If $a_i=1$ with witness $u$, keep $\varphi$ on
$G-v$ and set $\varphi(v)=i$ in $G-vu$: the only monochromatic edge would be
$vu$, which has been deleted.
\end{proof}

\begin{corollary}\label{cor:delta6}
In a target, $a_i(v,\varphi)\ge 2$ for all $v,\varphi,i$; hence $\delta(G)\ge 6$,
and in a $6$-regular target every neighbourhood splits exactly $2+2+2$ under every
vertex-deleted colouring. \textup{(}This is the local form of
\cite[Prop.~5.1]{SkSt25}.\textup{)}
\end{corollary}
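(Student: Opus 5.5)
The corollary follows directly from Lemma~\ref{lem:singleton} combined with the defining property $\chi(G-e)=4$ of a target. We fix a target $G$, a vertex $v$, and a $3$-colouring $\varphi$ of $G-v$. Such a colouring exists because $\chi(G-v)=3$, so in particular $G$ is $4$-vertex-critical and the lemma applies.

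First we show $a_i(v,\varphi)\ge 2$ for every colour $i$. By Lemma~\ref{lem:singleton}, $a_i(v,\varphi)\ge 1$. Suppose $a_i(v,\varphi)=1$, with unique colour-$i$ neighbour $u$. The second part of the lemma then gives $\chi(G-vu)=3$. This contradicts the target condition $\chi(G-e)=4$ for every edge $e$, so $a_i(v,\varphi)\ge 2$.

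The two remaining claims are counting consequences. Every neighbour of $v$ lies in $G-v$ and receives exactly one colour, so
\[
\deg(v)=a_1(v,\varphi)+a_2(v,\varphi)+a_3(v,\varphi)\ge 6 .
\]
Since $v$ was arbitrary, $\delta(G)\ge 6$. If $G$ is $6$-regular, the three summands are each at least $2$ and total $6$, so each equals $2$. Thus every neighbourhood splits $2+2+2$ under every vertex-deleted colouring.

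There is no real obstacle. The only point needing care is that vertex-deleted colourings exist at all, which holds because $\chi(G-v)=3$. This guarantees that the statement is not vacuous and that Lemma~\ref{lem:singleton} applies to every vertex $v$.
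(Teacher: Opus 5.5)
Your proof is correct and follows the same route the paper intends (the corollary is stated right after Lemma~\ref{lem:singleton} without a separate proof). A singleton colour class would make an edge critical, contradicting $\chi(G-e)=4$; summing the three counts, each at least $2$, gives $d(v)\ge 6$, with equality forcing the $2+2+2$ split. Noting that $\chi(G-v)=3$ guarantees a vertex-deleted colouring exists is the right detail for the degree bound.
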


\begin{corollary}\label{cor:quant}
Every $4$-vertex-critical graph $G$ has at least
$\tfrac12\sum_{v}\max(0,\,6-d(v))$ critical edges.
\end{corollary}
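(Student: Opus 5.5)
The plan is to apply Lemma~\ref{lem:singleton} vertex by vertex and then count critical edges so that each is charged at most twice. Fix a vertex $v$ with $d(v)\le 5$; vertices with $d(v)\ge 6$ contribute $0$ to the sum and can be ignored. Since $G$ is $4$-vertex-critical, $G-v$ has a $3$-colouring $\varphi$. By Lemma~\ref{lem:singleton}, $a_i(v,\varphi)\ge 1$ for each of the three colours $i$, and $a_1+a_2+a_3=d(v)$. Let $s(v)$ be the number of colours $i$ with $a_i(v,\varphi)=1$. Each such colour gives a unique neighbour $u_i$, and by the lemma $vu_i$ is a critical edge. The $u_i$ are distinct because they have distinct colours, so $v$ is incident to at least $s(v)$ critical edges.

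The key step is to show $s(v)\ge 6-d(v)$. Let $t=3-s(v)$ be the number of colours with $a_i\ge 2$. Then $d(v)=\sum_i a_i\ge s(v)+2t=s(v)+2(3-s(v))=6-s(v)$, which rearranges to $s(v)\ge 6-d(v)$. So every vertex $v$ is incident to at least $\max(0,6-d(v))$ critical edges. This inequality is the only real content and is immediate; there is no genuine obstacle.

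To finish, count pairs $(v,e)$ with $e$ a critical edge incident to $v$. There are at least $\sum_v \max(0,6-d(v))$ such pairs, and each critical edge lies in at most two of them, one per endpoint. Hence the number of critical edges is at least $\tfrac12\sum_v\max(0,6-d(v))$.

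The only point to check is that double counting is the sole source of loss. This holds because an edge found critical from both endpoints is counted twice, and the factor $\tfrac12$ absorbs exactly that.
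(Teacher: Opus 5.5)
Your proof is correct and follows the paper's own argument: one vertex-deleted colouring per vertex, the count $d(v)\ge s(v)+2(3-s(v))$ forcing at least $\max(0,6-d(v))$ singleton colour classes, each giving a critical edge at $v$ by Lemma~\ref{lem:singleton}, and a factor $\tfrac12$ for double counting. You also spell out a point the paper leaves implicit, namely that the singleton edges at a fixed $v$ are distinct because their other endpoints carry distinct colours.
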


\begin{proof}
Fix for each $v$ one colouring $\varphi_v$ of $G-v$. The three positive integers
$a_i(v,\varphi_v)$ sum to $d(v)$, so at least $\max(0,6-d(v))$ of them equal $1$;
each singleton yields a critical edge at $v$ by Lemma~\ref{lem:singleton}, and
each critical edge is counted at most twice.
\end{proof}

\begin{lemma}[folklore]\label{lem:comparable}
A $k$-vertex-critical graph contains no two nonadjacent vertices $u,v$ with
$N(u)\subseteq N(v)$.
\end{lemma}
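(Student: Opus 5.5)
Suppose for contradiction that $u,v$ are nonadjacent vertices with $N(u)\subseteq N(v)$. The plan is to colour $G-u$ with $k-1$ colours using criticality, and then extend this colouring back to $u$ by giving $u$ the colour of $v$. This yields a $(k-1)$-colouring of $G$, contradicting $\chi(G)=k$.

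The steps are as follows.

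First, since $G$ is $k$-vertex-critical, $\chi(G-u)=k-1$. We fix a proper colouring $\varphi$ of $G-u$ with $k-1$ colours. Note that $v\neq u$ and $v$ is a vertex of $G-u$, so $\varphi(v)$ is defined.

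Second, we set $\varphi(u)=\varphi(v)$. To check that this is proper, we must show that no neighbour of $u$ has colour $\varphi(v)$. Let $w\in N(u)$. Then $w\in N(v)$ by the containment hypothesis, so $w$ is adjacent to $v$ in $G-u$. Here $w\ne u$ because graphs are simple, so $w$ is indeed a vertex of $G-u$. Hence $\varphi(w)\neq\varphi(v)$.

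Third, nonadjacency of $u$ and $v$ ensures that assigning equal colours to them creates no monochromatic edge between them. Together with the previous step, the extended map is a proper $(k-1)$-colouring of $G$. This contradicts $\chi(G)=k$.

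There is no real obstacle here. The only points that need care are that $v\in V(G-u)$, and that nonadjacency is exactly what is used when $u$ and $v$ receive the same colour. Indeed, if $u$ and $v$ were adjacent, then $v\in N(u)\subseteq N(v)$ would be impossible anyway.
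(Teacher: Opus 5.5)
Your proposal is correct and is essentially the paper's own argument: colour $G-u$ with $k-1$ colours, give $u$ the colour of $v$, and use $N(u)\subseteq N(v)$ to see that no conflict arises. Your extra checks ($v\in V(G-u)$, and nonadjacency being automatic since $v\notin N(v)$) are sound.
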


\begin{proof}
Take a $(k-1)$-colouring $\varphi$ of $G-u$ and set $\varphi(u)=\varphi(v)$:
every neighbour of $u$ is a neighbour of $v$, so no conflict arises and
$\chi(G)\le k-1$.
\end{proof}

We will also use the following observation, which gives a very effective
computational prefilter.

\begin{lemma}\label{lem:nbhd}
Let $G$ be a $4$-vertex-critical graph on $n\ge 8$ vertices with maximum degree at
most $6$. Then for every vertex $v$ the induced neighbourhood $G[N(v)]$ is bipartite.
\end{lemma}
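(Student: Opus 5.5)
The plan is to show that an odd cycle in a neighbourhood produces a $4$-chromatic subgraph that is too small, and then to use vertex-criticality to force $G$ itself to be that small.

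Suppose, for a contradiction, that for some vertex $v$ the graph $G[N(v)]$ is not bipartite. Then it contains an odd cycle $C$. Its length is at most $|N(v)|=d(v)\le 6$, so, being odd, $C$ has at most $5$ vertices.

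First I will observe that the subgraph $W$ formed by $C$ together with $v$ and all edges from $v$ to $V(C)$ is an odd wheel, so $\chi(W)=4$. Indeed, the odd rim $C$ needs three colours, and the hub $v$ is adjacent to every rim vertex, so it needs a fourth.

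Next I will use vertex-criticality to force $V(G)=V(W)$. If some vertex $w\in V(G)\setminus V(W)$ existed, then $W$ would be a subgraph of $G-w$. This gives $\chi(G-w)\ge \chi(W)=4$, contradicting $\chi(G-w)=3$. Hence every vertex of $G$ lies in $W$, and
\[
n=|V(W)|=|V(C)|+1\le 6<8,
\]
which contradicts the hypothesis $n\ge 8$. So $G[N(v)]$ is bipartite for every $v$.

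There is no real obstacle here. The only point needing care is that only \emph{vertex}-criticality is available, not edge-criticality. The argument therefore has to delete a vertex outside $W$ rather than an edge; this is exactly what the step forcing $V(G)=V(W)$ does. The bound $d(v)\le 6$ is used only to make the odd wheel have at most $6$ vertices. In fact the argument shows $n\le 6$ already suffices as the exceptional range, so the hypothesis $n\ge 8$ has slack.
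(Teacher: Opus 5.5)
Your proof is correct and is essentially the paper's argument: an odd cycle in $N(v)$ has length $3$ or $5$, the resulting odd wheel $W$ on at most $6$ vertices is $4$-chromatic, and deleting a vertex outside $W$ contradicts vertex-criticality. The paper first passes to a chordless odd cycle so that $W$ is induced, which is cosmetic and unnecessary since $W$ only needs to be a subgraph of $G-w$; your remark that $n\ge 7$ already suffices is also right.
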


\begin{proof}
Suppose $G[N(v)]$ contains an odd cycle; then it contains a chordless (induced) odd
cycle $C$. Since $|N(v)|=d(v)\le 6$, the length of $C$ is $3$ or $5$, so the wheel
$W=\{v\}\cup C$ is an induced subgraph on at most $6$ vertices, and it is
$4$-chromatic. As $n\ge 8>|W|$, there is a vertex $u\notin W$; then $W\subseteq G-u$,
so $\chi(G-u)\ge 4$, contradicting $4$-vertex-criticality.
The degree hypothesis is essential: without it an odd wheel (which is itself
$4$-vertex-critical) has a non-bipartite neighbourhood at its hub. In this note the
lemma is applied only to $6$-regular graphs, where $\Delta(G)=6$.
\end{proof}

\section{Exact rigidity of $6$-edge-cuts}\label{sec:cuts}

Let $G$ be a target and $V(G)=A\sqcup B$ a nontrivial partition with cut
$F=E(A,B)$. Both $G[A]$ and $G[B]$ are $3$-colourable (each is contained in some
$G-x$). For $3$-colourings $\alpha$ of $G[A]$ and $\beta$ of $G[B]$ define the
\emph{cut matrix} $M=(m_{ij})$ by
$m_{ij}=|\{ab\in F:\alpha(a)=i,\ \beta(b)=j\}|$, and for $\pi\in S_3$ the diagonal
sum $D_\pi(M)=\sum_i m_{i\pi(i)}$, the number of monochromatic cut edges after
permuting the colours of $B$ by $\pi$.

\begin{lemma}\label{lem:Dpi}
In a target: $D_\pi(M)=0$ is impossible; if $D_\pi(M)=1$ then the unique
monochromatic cut edge is critical. Hence $D_\pi(M)\ge 2$ for all $\pi$, and since
$\tfrac16\sum_{\pi}D_\pi(M)=|F|/3$, every cut satisfies $|F|\ge 6$
\textup{(\cite[Prop.~5.1]{SkSt25})}, and for $|F|=6$,
\[
D_\pi(M)=2\qquad\text{for all }\pi\in S_3\text{ and all pairs }(\alpha,\beta).
\]
\end{lemma}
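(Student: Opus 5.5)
The plan is to combine the colourings $\alpha,\beta$ into a colouring of all of $G$, and then to average over the six colour permutations. Fix $\pi\in S_3$ and define $\gamma$ on $V(G)$ by $\gamma|_A=\alpha$ and $\gamma|_B=\pi^{-1}\circ\beta$, so that $b\in B$ receives colour $i$ exactly when $\beta(b)=\pi(i)$. Edges inside $A$ or inside $B$ are properly coloured, since $\alpha$ and $\beta$ are proper. A cut edge $ab$ with $\alpha(a)=i$ is monochromatic under $\gamma$ iff $\beta(b)=\pi(i)$. Summing over $i$, the number of monochromatic edges of $\gamma$ is exactly $D_\pi(M)$.

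For the two small cases:
\begin{itemize}
\item If $D_\pi(M)=0$, then $\gamma$ is a proper $3$-colouring of $G$, contradicting $\chi(G)=4$.
\item If $D_\pi(M)=1$, let $e$ be the unique monochromatic edge. Then $\gamma$ properly $3$-colours $G-e$, so $\chi(G-e)\le 3$ and $e$ is critical. This contradicts the target condition $\chi(G-e)=4$ for every edge.
\end{itemize}
Hence $D_\pi(M)\ge 2$ for every $\pi$.

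For the averaging identity, each entry $m_{ij}$ lies on the "diagonal" of $\pi$ exactly when $\pi(i)=j$, which holds for exactly $2$ of the $6$ permutations. Therefore
\[
\sum_\pi D_\pi(M)=2\sum_{i,j}m_{ij}=2|F|,
\]
which gives $\tfrac16\sum_\pi D_\pi(M)=|F|/3$. Combined with $D_\pi\ge 2$, this yields $|F|/3\ge 2$, that is, $|F|\ge 6$.

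For the equality case $|F|=6$, the average of the six values $D_\pi(M)$ is exactly $2$, and each value is at least $2$. So every $D_\pi(M)$ equals $2$, and this holds for any choice of the pair $(\alpha,\beta)$.

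None of these steps is a real obstacle. The only points needing care are the bookkeeping of which permutation produces which monochromatic edges, and the remark preceding the lemma that $G[A]$ and $G[B]$ are $3$-colourable: each is a proper subgraph of some $G-x$, since the partition is nontrivial and both shores are nonempty. This guarantees that the pairs $(\alpha,\beta)$ exist.
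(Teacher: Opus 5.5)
Your proposal is correct and follows the paper's proof essentially verbatim: gluing $\alpha$ with the $\pi$-permuted $\beta$ yields a colouring whose monochromatic edges are exactly the $D_\pi(M)$ diagonal cut edges, and double counting (each entry $m_{ij}$ lies on exactly two of the six permutation diagonals) together with ``minimum equals average'' gives $|F|\ge 6$ and the equality case. One cosmetic point: $G[A]$ need not be a \emph{proper} subgraph of $G-x$ (it equals $G-x$ when $B=\{x\}$), but containment is all you actually use.
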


\begin{proof}
$D_\pi=0$ gives a $3$-colouring of $G$; $D_\pi=1$ gives a $3$-colouring of $G-e$
for the unique monochromatic cut edge $e$. Each cut edge lies on exactly two of
the six permutation diagonals, giving the average $|F|/3$; for $|F|=6$ the average
equals the minimum, forcing equality everywhere.
\end{proof}

\begin{theorem}[$21$-matrix classification]\label{thm:cutmatrix}
Let $M$ be a $3\times 3$ matrix with nonnegative integer entries, total sum $6$,
and $D_\pi(M)=2$ for all $\pi\in S_3$. Then
\[
m_{ij}=\frac{R_i+C_j-2}{3}
\]
where $R_i,C_j$ are the row and column sums; all $R_i$ are congruent
$\bmod\ 3$, likewise all $C_j$; and $M$ is, up to row and column permutations, one
of the five types
\[
\begin{pmatrix}2&2&2\\0&0&0\\0&0&0\end{pmatrix},\
\begin{pmatrix}1&1&1\\1&1&1\\0&0&0\end{pmatrix},\
\begin{pmatrix}0&0&1\\0&0&1\\1&1&2\end{pmatrix},\
\begin{pmatrix}2&0&0\\2&0&0\\2&0&0\end{pmatrix},\
\begin{pmatrix}1&1&0\\1&1&0\\1&1&0\end{pmatrix},
\]
with orbit sizes $3,3,9,3,3$: exactly $21$ matrices in all. In particular the
row-sum vector is a permutation of $(6,0,0)$, $(3,3,0)$, $(4,1,1)$ or $(2,2,2)$.
\end{theorem}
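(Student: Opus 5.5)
The plan is to derive the formula $m_{ij}=(R_i+C_j-2)/3$ directly from the six permutation constraints, and then read off the classification by enumerating admissible row and column sum vectors.

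\textbf{Step 1 (the formula).} Fix an entry $(i,j)$. The two permutations $\pi$ with $\pi(i)=j$ have diagonals that both contain $m_{ij}$. Their remaining entries form the $2\times 2$ submatrix on rows $\{i',i''\}$ and columns $\{j',j''\}$ complementary to $i$ and $j$: one diagonal uses the main diagonal of that submatrix, the other its antidiagonal. Summing the two equations $D_\pi=2$ gives
\[
2m_{ij}+S_{ij}=4,
\]
where $S_{ij}$ is the sum of the complementary $2\times 2$ block. By inclusion--exclusion this block sum is
\[
S_{ij}=6-R_i-C_j+m_{ij}.
\]
Substituting, $3m_{ij}=R_i+C_j-2$, which is the claimed formula.

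\textbf{Step 2 (congruences).} Integrality of $m_{ij}$ gives $R_i+C_j\equiv 2 \pmod 3$ for all $i,j$. Fixing $j$ shows that all $R_i$ are congruent mod $3$, and symmetrically all $C_j$ are congruent. Conversely, any matrix of the form $m_{ij}=(R_i+C_j-2)/3$ has each diagonal sum equal to $(6+6-6)/3=2$. It is also consistent with the totals: summing over $j$ gives $(3R_i+6-6)/3=R_i$, and likewise for columns.

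\textbf{Step 3 (enumeration).} The row sums are nonnegative, sum to $6$, and are pairwise congruent mod $3$. The admissible multisets are
\begin{itemize}
\item[] residue $0$: $\{6,0,0\}$, $\{3,3,0\}$;
\item[] residue $1$: $\{4,1,1\}$;
\item[] residue $2$: $\{2,2,2\}$.
\end{itemize}
The same list holds for the column sums. Since $R_i+C_j\equiv 2$, the residue classes pair up: rows $\equiv 0$ force columns $\equiv 2$, rows $\equiv 1$ force columns $\equiv 1$, and rows $\equiv 2$ force columns $\equiv 0$. Nonnegativity then requires $R_{\min}+C_{\min}\ge 2$. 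This rules out rows $(4,1,1)$ only if some $C_j$ is below $1$, which cannot happen, so that case survives. The resulting candidates are:
\begin{itemize}
\item[] rows $(6,0,0)$ with columns $(2,2,2)$, giving the first type;
\item[] rows $(3,3,0)$ with columns $(2,2,2)$, giving the second type;
\item[] rows $(4,1,1)$ with columns $(4,1,1)$, giving the third type, since $(1+1-2)/3=0$, $(4+1-2)/3=1$ and $(4+4-2)/3=2$;
\item[] the transposes of the first two, namely rows $(2,2,2)$ with columns $(6,0,0)$ or $(3,3,0)$, giving the fourth and fifth types.
\end{itemize}

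\textbf{Step 4 (orbit counts).} The orbit size of each type equals the number of distinct arrangements of its row vector times the number of distinct arrangements of its column vector. This gives $3\cdot1$, $3\cdot1$, $3\cdot3$, $1\cdot3$ and $1\cdot3$, for a total of $21$.

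The only step I expect to need care is the inclusion--exclusion identity in Step 1. The rest is bookkeeping, and the nonnegativity check in Step 3 should be verified explicitly against each candidate pair of sum vectors.
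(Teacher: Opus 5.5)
Your proof is correct. It differs from the paper's only in how you derive the formula $m_{ij}=(R_i+C_j-2)/3$.

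The paper subtracts the diagonal equations of two permutations that differ by a transposition. This gives the exchange identity $m_{rc}+m_{sd}=m_{rd}+m_{sc}$ on every $2\times 2$ submatrix, hence $m_{pq}=m_{pj}+m_{iq}-m_{ij}$. Summing that over all nine positions yields $6=3R_i+3C_j-9m_{ij}$. You instead add the two diagonal equations through $(i,j)$ and evaluate the complementary $2\times 2$ block by inclusion--exclusion. The paper's route shows that the additive structure $m_{pq}=x_p+y_q$ follows from the mere equality of the six diagonal sums, with the common value $2$ then forced by the total $6$. Yours uses the value $2$ directly and is a one-line computation.

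Your congruence argument and the enumeration of row and column sum vectors match the paper's. You go further in one respect that the paper leaves to ``case analysis'' and to its Lean enumeration of the $3003$ weak compositions. Your converse in Step~2 says that every admissible pair $(R,C)$ with $R_{\min}+C_{\min}\ge 2$ yields a valid matrix. Combined with injectivity of $M\mapsto(R,C)$, which is immediate from the formula, this gives a bijection between valid matrices and ordered pairs of sum vectors. That bijection is what justifies your hand count $3+3+9+3+3=21$ of the orbit sizes.

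The nonnegativity check you flagged is immediate: in all five cases $R_{\min}+C_{\min}=2$ exactly.
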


\begin{proof}
Comparing two permutations that differ by a transposition and cancelling the
common entry gives $m_{rc}+m_{sd}=m_{rd}+m_{sc}$ for every $2\times 2$ submatrix;
hence $m_{pq}=m_{pj}+m_{iq}-m_{ij}$ for all $p,q,i,j$, and summing over all
$(p,q)$ yields $6=3R_i+3C_j-9m_{ij}$, i.e.\ the displayed formula. Integrality
forces $R_i+C_j\equiv 2 \pmod 3$ for all $i,j$, whence the congruence claims; the
nonnegative solutions of $\sum R_i=6$ with all $R_i$ congruent $\bmod\ 3$ are the
four listed vectors, and the case analysis over $(R,C)$ produces exactly the five
displayed types. The count $21$ is also machine-checked (Lean, by exhaustive
enumeration of the $3003$ weak compositions of $6$ into $9$ parts together with a
completeness lemma for the enumeration; see Section~\ref{sec:computations}).
\end{proof}

Since the row sums of $M(\alpha,\beta)$ do not depend on $\beta$, we obtain:

\begin{corollary}\label{cor:rowsum}
For every $3$-colouring $\alpha$ of a $6$-cut shore $G[A]$ in a target, the vector
$\bigl(\sum_{\alpha(v)=i} b(v)\bigr)_{i=1,2,3}$, where $b(v)$ is the number of cut
edges at $v$, is a permutation of $(6,0,0)$, $(3,3,0)$, $(4,1,1)$ or $(2,2,2)$.
\end{corollary}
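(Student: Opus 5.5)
The plan is to read off the row sums of the cut matrix and then apply Theorem~\ref{thm:cutmatrix}. Fix a $3$-colouring $\alpha$ of $G[A]$. Such a colouring exists: $A$ is a proper subset of $V(G)$, so $G[A]\subseteq G-x$ for any $x\in B$, and $\chi(G-x)=3$. Likewise fix any $3$-colouring $\beta$ of $G[B]$. Let $M=M(\alpha,\beta)$ be the cut matrix.

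The first step is the identification of row sums. By definition
\[
R_i=\sum_j m_{ij}=\bigl|\{ab\in F:\alpha(a)=i\}\bigr|,
\]
because every cut edge $ab$ has its endpoint $b\in B$ coloured by exactly one colour $j$. Now group the cut edges by their endpoint in $A$. This gives
\[
R_i=\sum_{v\in A,\ \alpha(v)=i} b(v),
\]
which is exactly the vector in the statement, and it does not involve $\beta$.

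The second step is to check the hypotheses of Theorem~\ref{thm:cutmatrix}. The entries of $M$ are nonnegative integers. Their total is $|F|=6$. By Lemma~\ref{lem:Dpi}, the $6$-cut case gives $D_\pi(M)=2$ for every $\pi\in S_3$ and every pair $(\alpha,\beta)$. Theorem~\ref{thm:cutmatrix} then says that the row-sum vector of $M$ is a permutation of $(6,0,0)$, $(3,3,0)$, $(4,1,1)$ or $(2,2,2)$. Since that vector is the one computed above, the corollary follows.

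No real obstacle is expected. The only points to state explicitly are that both shores are $3$-colourable, so that $\beta$ exists and Lemma~\ref{lem:Dpi} applies, and the convention that the cut matrix is indexed by (colour in $A$, colour in $B$), so that rows correspond to $A$.
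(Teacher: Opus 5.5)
Your proposal is correct and follows the paper's own argument. The paper derives the corollary in one line from Theorem~\ref{thm:cutmatrix} via Lemma~\ref{lem:Dpi}, observing that the row sums of $M(\alpha,\beta)$ are independent of $\beta$. You spell out the same steps, including the existence of $\beta$ from vertex-criticality and the identification $R_i=\sum_{\alpha(v)=i}b(v)$.
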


\section{Shore constraints}\label{sec:shores}

From now on $G$ is a $6$-regular target and $A$ a shore of a nontrivial
$6$-edge-cut, $a=|A|\ge 2$, $H=G[A]$, $b(v)=6-d_H(v)$.

\begin{lemma}\label{lem:basic}
$e(H)=3a-3$; $H$ is connected; $0\le b(v)\le 5$ and $\sum_{v\in A}b(v)=6$.
\end{lemma}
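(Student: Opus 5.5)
We prove the four claims in order. The edge count and the bounds on $b$ come from degree counting, and connectivity comes from $\lambda(G)\ge 6$.

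First, the bounds and sum. Since $G$ is $6$-regular, $b(v)=6-d_H(v)$ is exactly the number of cut edges at $v$. Hence $b(v)\ge 0$, and $\sum_{v\in A}b(v)=|F|=6$.

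For the edge count, summing degrees in $H$ gives
\[
2e(H)=\sum_{v\in A}d_H(v)=6a-6,
\]
so $e(H)=3a-3$.

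For the upper bound $b(v)\le 5$, suppose some $v$ had $b(v)=6$. Then all six cut edges sit at $v$ and every other vertex of $A$ has $b=0$. Because $a\ge 2$, the set $A\setminus\{v\}$ is nonempty, and its only edges to the rest of $G$ go to $v$. Those edges number $d_H(v)=0$, so $A\setminus\{v\}$ has no edges leaving it. This contradicts the connectivity of $G$, which follows from $\lambda(G)\ge 6$.

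For connectivity of $H$, suppose $H$ splits as $A=A_1\sqcup A_2$ with no edges between $A_1$ and $A_2$ and both parts nonempty. Every edge leaving $A_1$ is then a cut edge of $F$, and likewise for $A_2$. So $|E(A_1,V\setminus A_1)|+|E(A_2,V\setminus A_2)|\le 6$. But $\lambda(G)\ge 6$ (Lemma~\ref{lem:Dpi}) forces each term to be at least $6$, giving a sum of at least $12$, a contradiction. Each $A_i$ is a proper nonempty subset of $V(G)$, since $B\neq\emptyset$, so the bound $\lambda(G)\ge 6$ applies to it.

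There is no real obstacle here. The only point needing care is that both $b(v)\le 5$ and the connectivity of $H$ rest on $\lambda(G)\ge 6$, applied to sets that are proper, nonempty and have nonempty complement.
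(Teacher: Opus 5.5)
Your proof is correct and follows the paper's argument: degree counting gives $e(H)=3a-3$ and $\sum_{v\in A}b(v)=6$, and connectivity of $H$ follows from applying $\lambda(G)\ge 6$ to the components of $H$, whose boundaries are disjoint subsets of the $6$-edge cut. The only cosmetic difference is that the paper deduces $b(v)\le 5$ from the connectivity of $H$ (a vertex with $b(v)=6$ would be isolated in $H$), whereas you reach the same contradiction directly from the connectivity of $G$.
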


\begin{proof}
Degree sum: $6a=2e(H)+6$. If $H$ had components $A_1,\dots,A_t$, the cuts
$\partial_G(A_i)$ are disjoint subsets of $\partial_G(A)$, each of size at least
$6$ by $\lambda(G)\ge 6$ \cite{SkSt25}, and their sizes sum to $6$; so $t=1$.
A vertex with $b(v)=6$ would be isolated in $H$.
\end{proof}

\begin{lemma}[boundary-shortfall lemma]\label{lem:shortfall}
For every $v\in A$ there exists a $3$-colouring $\psi$ of $H-v$ with
\[
\sum_{i=1}^{3}\max\bigl(0,\,2-c_i(\psi)\bigr)\ \le\ b(v),
\qquad\text{where } c_i(\psi)=|\{u\in \NA(v):\psi(u)=i\}|.
\]
\end{lemma}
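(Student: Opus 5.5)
The plan is to take $\psi$ to be the restriction of a vertex-deleted colouring of the whole graph and then apply Corollary~\ref{cor:delta6} at $v$. Fix $v\in A$. Since $G$ is a target, $G-v$ has a proper $3$-colouring $\varphi$. Set $\psi=\varphi|_{A\setminus\{v\}}$. This is a proper $3$-colouring of $H-v=G[A]-v$, because $H-v$ is an induced subgraph of $G-v$.

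Next, compare the colour counts on $N(v)$ in $G$ with those on $\NA(v)$. The full neighbourhood splits as $N(v)=\NA(v)\sqcup N_B(v)$, where $N_B(v)=N(v)\cap B$ has exactly $b(v)$ vertices; this uses only that $G$ is simple, so cut edges at $v$ go to distinct vertices of $B$. Hence
\[
a_i(v,\varphi)=c_i(\psi)+e_i,\qquad e_i:=|\{u\in N_B(v):\varphi(u)=i\}|,\qquad \sum_i e_i=b(v).
\]
By Corollary~\ref{cor:delta6} we have $a_i(v,\varphi)\ge 2$ for every $i$ (in the $6$-regular case in fact $a_i=2$ exactly). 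Therefore $e_i\ge 2-c_i(\psi)$ for each $i$. Since also $e_i\ge 0$, this gives $e_i\ge\max(0,2-c_i(\psi))$. Summing over $i$ yields $\sum_i\max(0,2-c_i(\psi))\le\sum_i e_i=b(v)$, which is the claimed inequality.

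I expect no real obstacle. The only points to check are that deleting $v$ from $G$ leaves all of $H-v$ coloured, and that the neighbours of $v$ in $B$ are counted exactly once by $b(v)$. Both are immediate from the definitions: $v\in A$, and $b(v)=6-d_H(v)$ equals the number of cut edges at $v$.
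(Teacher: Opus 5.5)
Your proof is correct and follows the paper's own argument: restrict a $3$-colouring of $G-v$ to $H-v$, split $N_G(v)$ into $\NA(v)$ and the $b(v)$ outside cut-neighbours, and use $a_i\ge 2$ from Corollary~\ref{cor:delta6} to get $e_i\ge\max(0,2-c_i(\psi))$. Summing over $i$ then gives the bound, exactly as in the paper.
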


\begin{proof}
Take a $3$-colouring $\varphi$ of $G-v$ and let $\psi=\varphi|_{A-v}$. By
Corollary~\ref{cor:delta6}, every colour appears at least twice on $N_G(v)$,
which is the disjoint union of $\NA(v)$ and the set $C_v$ of outside
cut-neighbours of $v$, $|C_v|=b(v)$. Writing $o_i$ for the number of vertices of
$C_v$ coloured $i$ we get $c_i+o_i\ge 2$, hence
$\max(0,2-c_i)\le o_i$, and summing over $i$ gives the bound since
$\sum_i o_i=b(v)$.
\end{proof}

For $v$ with $b(v)=0$ this says: \emph{$H-v$ must admit a colouring in which
$N(v)$ splits exactly $2+2+2$} --- full-degree vertices must be
``deletion-unfrozen''. Note also that for $u,v\in A$ nonadjacent with $b(u)=0$,
Lemma~\ref{lem:comparable} localizes: $\NA(u)\subseteq \NA(v)$ is impossible,
because then $N_G(u)=\NA(u)\subseteq N_G(v)$.

\section{Small shores}\label{sec:nine}

\begin{theorem}\label{thm:shore8}
No $6$-edge-cut in a target \textup{(}$6$-regular or not\textup{)} has a shore of
size $2\le a\le 8$.
\end{theorem}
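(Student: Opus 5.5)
The plan is a pure counting argument combining the minimum-degree bound with the $3$-colourability of the shore, followed by a single structural exclusion in the one equality case. Let $A$ be a shore with $|A|=a$, $H=G[A]$, and let $b(v)$ be the number of cut edges at $v\in A$. Since $\delta(G)\ge 6$ (Corollary~\ref{cor:delta6}), summing degrees over $A$ gives
\[
6a\ \le\ \sum_{v\in A} d_G(v)\ =\ 2e(H)+6,
\]
so $e(H)\ge 3a-3$. Equality holds exactly when every vertex of $A$ has degree $6$ in $G$. Regularity is not used, which is why the statement covers targets that are not $6$-regular.

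For the upper bound, note that $H$ is $3$-colourable, because it is contained in $G-x$ for any $x\notin A$. Its three colour classes partition $A$, so $H$ is a subgraph of a complete $3$-partite graph on $a$ vertices. Hence $e(H)\le t_3(a)=\lfloor a^2/3\rfloor$, the Turán number, and the Turán graph $T(a,3)$ is the unique extremal graph. Comparing $3a-3$ with $\lfloor a^2/3\rfloor$ for $a=2,\dots,8$ gives $3>1$, $6>3$, $9>5$, $12>8$, $15>12$ and $18>16$ for $a\le 7$. So every $a\le 7$ is excluded immediately.

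For $a=8$ both sides equal $21$, so equality is forced throughout. First, $H$ must be complete $3$-partite with parts of sizes $3,3,2$: a $3$-colourable graph with $21$ edges on $8$ vertices must contain every edge between its colour classes, and only the balanced split reaches $21$. Second, every vertex of $A$ has $d_G(v)=6$. Consequently each vertex in a part of size $3$ has $d_H=5$ and $b=1$, while each vertex of the part of size $2$ has $d_H=6$ and $b=0$; these values are consistent with $\sum b=6$.

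Let $u,u'$ be the two vertices of the size-$2$ part. They are nonadjacent, and because $b(u)=b(u')=0$ their full neighbourhoods satisfy $N_G(u)=N_G(u')=A\setminus\{u,u'\}$. Lemma~\ref{lem:comparable}, in its localized form noted after Lemma~\ref{lem:shortfall}, forbids two such comparable nonadjacent vertices, so this case is impossible. That lemma needs only vertex-criticality, so again no regularity is required.

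The only delicate point is the $a=8$ equality case: one must correctly invoke the uniqueness of the Turán extremal graph for $3$-partite graphs, and observe that equality in the degree count forces every vertex of $A$ to have degree exactly $6$, which rules out any extra outside edges at $u$ or $u'$.
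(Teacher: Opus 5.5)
Your proof is correct and is essentially the paper's argument: the degree-sum bound $e(H)\ge 3a-3$ against the Tur\'an bound $\lfloor a^2/3\rfloor$ excludes $a\le 7$, and at $a=8$ forced equality gives $H=K_{3,3,2}$ with every vertex of $A$ of $G$-degree $6$, so the two vertices of the part of size $2$ are nonadjacent with $N_G(u)=N_G(u')$, contradicting Lemma~\ref{lem:comparable}. One small remark: because you compute $N_G(u)=N_G(u')$ directly, you can cite Lemma~\ref{lem:comparable} itself rather than its localized form, which the paper states in the $6$-regular setting of Section~\ref{sec:shores}.
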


\begin{proof}
By $\delta(G)\ge 6$, $e(H)\ge 3a-3$, while $H$ is $3$-colourable, so
$e(H)\le\lfloor a^2/3\rfloor$. For $2\le a\le 7$ these conflict
($\lfloor a^2/3\rfloor<3a-3$; this numeric fact is also Lean-checked). For $a=8$
both bounds equal $21$, forcing equality: $H$ is the Tur\'an graph
$K_{3,3,2}$ and every vertex of $A$ has $G$-degree exactly $6$. The two vertices
of the part of size $2$ then have all six edges inside $A$, so they are
nonadjacent vertices with equal neighbourhoods in $G$, contradicting
Lemma~\ref{lem:comparable}.
\end{proof}

\begin{theorem}\label{thm:shore913}
In a $6$-regular target, no $6$-edge-cut has a shore of size $9\le a\le 14$.
\end{theorem}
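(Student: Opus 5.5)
The plan is to reduce Theorem~\ref{thm:shore913} to a finite, exhaustively enumerable family of candidate shores and then eliminate every member with the four local conditions already established. Fix a $6$-regular target $G$ and a nontrivial $6$-edge-cut with shore $A$, $9\le a\le 14$, and put $H=G[A]$. By Lemma~\ref{lem:basic}, $H$ is a connected graph on $a$ vertices with $e(H)=3a-3$, maximum degree at most $6$ and $b(v)=6-d_H(v)$ summing to $6$. Since $H\subseteq G-x$ for any $x\notin A$, $H$ is $3$-colourable. By Lemma~\ref{lem:nbhd} (applied in $G$, with $n\ge 16$ by Theorem~A), every $N_A(v)=N_G(v)\cap A$ induces a bipartite graph in $H$. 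So the first step is to generate, with \texttt{geng}, all connected graphs on $a$ vertices with $3a-3$ edges and $\Delta\le 6$, keeping only those that are $3$-colourable and have bipartite neighbourhoods. These density and degree constraints are what keep the enumeration feasible up to $a=14$.

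For each surviving $H$, the plan applies the filters in increasing order of cost:
\begin{enumerate}
\item the localized comparability test: no nonadjacent $u,v$ with $b(u)=0$ and $N_A(u)\subseteq N_A(v)$ (Lemma~\ref{lem:comparable});
\item the row-sum test of Corollary~\ref{cor:rowsum}: enumerate all proper $3$-colourings $\alpha$ of $H$ and check that the colour-class deficiency vector is always a permutation of $(6,0,0)$, $(3,3,0)$, $(4,1,1)$ or $(2,2,2)$;
\item the boundary-shortfall test of Lemma~\ref{lem:shortfall}: for each $v$, search the colourings of $H-v$ for one whose neighbourhood shortfall is at most $b(v)$.
\end{enumerate}
Test~2 has a useful partial converse. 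It can be strengthened by requiring that the deficiency be distributable so that some compatible outer matrix exists, but I would first try the plain row-sum condition.

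One complication is that $b$ is determined by $H$ only once we know $H$ sits inside a $6$-regular $G$, and then $b(v)=6-d_H(v)$ is forced. So no separate enumeration over $b$ is needed. The theorem follows if every candidate fails at least one test. The examples to keep in mind are near-Turán graphs at $a=9$. For instance, $K_{3,3,3}$ has $27$ edges against the required $24$, so we delete $3$ edges. The rainbow $3$-matching version has $b=1$ on six vertices, and every colouring has deficiency vector $(2,2,2)$, so it passes test~2. It must then be killed by test~3: deleting a deficient vertex $v$ leaves a rigid enough structure that $N_A(v)$, which has sizes $3$ and $2$ in the other parts, cannot reach shortfall $\le 1$. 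I would verify this case by hand, since it is the advertised near-miss.

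The main obstacle is completeness and scale of the enumeration at $a=13,14$. Connected graphs with $\sim 36$ to $39$ edges and $\Delta\le 6$ on $14$ vertices are numerous. To handle this, I would push the cheap necessary conditions into generation. These are the degree bounds, the requirement of at least $a-6$ vertices of full degree $6$, and, if possible, $3$-colourability with bipartite neighbourhoods as a pruning hook. I would also cross-check the counts with an independent implementation, as the paper indicates. Any survivor would need an ad hoc argument, and I expect none to remain.
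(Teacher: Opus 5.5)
Your plan is essentially the paper's proof: the same \texttt{geng} enumeration of connected graphs with $a$ vertices, $3a-3$ edges and $\Delta\le 6$, eliminated by $3$-colourability, Corollary~\ref{cor:rowsum} over all colourings, the localized Lemma~\ref{lem:comparable} and Lemma~\ref{lem:shortfall}, with the paper's computation confirming that nothing survives for $9\le a\le 14$. Your extra prefilter from Lemma~\ref{lem:nbhd} is valid but not needed, and killing $H_9$ at a deficient vertex also works (the full-degree vertex of that vertex's colour class is adjacent to both other classes, which forces them onto two colours and gives shortfall $2>b(v)=1$), whereas the paper kills $H_9$ at a full-degree vertex instead.
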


\begin{proof}[Proof (computer-assisted)]
By Lemma~\ref{lem:basic} a shore of size $a$ induces a connected graph on $a$
vertices with exactly $3a-3$ edges and maximum degree at most $6$. We enumerated
all such graphs with \texttt{geng} (nauty~2.8.9) and applied the necessary
conditions of Sections~\ref{sec:cuts}--\ref{sec:shores}: $3$-colourability;
Corollary~\ref{cor:rowsum} for \emph{every} $3$-colouring; the localized
Lemma~\ref{lem:comparable}; and Lemma~\ref{lem:shortfall} for every vertex. The
outcome:

\begin{center}
\begin{tabular}{rrrrrrr}
\toprule
$a$ & generated & not $3$-col. & fail Cor.~\ref{cor:rowsum} & fail
Lem.~\ref{lem:comparable} & fail Lem.~\ref{lem:shortfall} & survive\\
\midrule
9  & 729           & 711           & 9          & 8       & 1         & 0\\
10 & 18\,655        & 18\,345        & 197        & 86      & 27        & 0\\
11 & 696\,208       & 687\,377       & 6\,013      & 1\,300   & 1\,518     & 0\\
12 & 32\,833\,744    & 32\,484\,081    & 241\,863    & 27\,322  & 80\,478    & 0\\
13 & 1\,839\,349\,287 & 1\,822\,133\,664 & 11\,944\,366 & 788\,481 & 4\,482\,776 & 0\\
14 & 119\,236\,283\,370 & 118\,201\,012\,144 & 719\,933\,200 & 28\,146\,103 & 287\,191\,923 & 0\\
\bottomrule
\end{tabular}
\end{center}

No graph survives, proving the theorem. The verification protocol (generator-side
completeness guards, an independent Python reimplementation for $a\le 11$, and
per-graph certificates) is described in Section~\ref{sec:computations}.
\end{proof}

\begin{remark}[the near-miss at $a=9$]
Exactly one graph survives all the \emph{counting} filters at $a=9$:
$H_9=K_{3,3,3}\setminus M_3$, where $M_3$ is a rainbow $3$-matching --- one
missing edge between each pair of colour classes, with six distinct endpoints
(graph6 \texttt{HEzftz\{}). $H_9$ has a unique $3$-colouring up to
colour permutation; its three full-degree vertices --- one in each class --- have
all neighbours inside $A$. For each such vertex $v$, all six colourings of
$H_9-v$ leave some colour with at most one occurrence on $N(v)$, so
Lemma~\ref{lem:shortfall} (with $b(v)=0$) excludes $H_9$. This is the only point
at $a\le 13$ where the boundary-shortfall lemma is essential for a graph that the
other filters miss, and we verified the kill additionally by brute force over all
$3^{8}$ assignments.
\end{remark}

\begin{proof}[Proof of Theorem~\ref{thm:B}]
A nontrivial $6$-edge-cut has shores of sizes $\ge 2$; Theorems~\ref{thm:shore8}
and \ref{thm:shore913} exclude sizes $2..14$, so both shores have size at least
$15$, forcing $n\ge 30$. For $n\le 29$ every $6$-edge-cut is therefore trivial,
i.e.\ a vertex star.
\end{proof}

\section{Concentrated boundaries: bipartite shores are impossible}
\label{sec:bipartite}

The filters of Section~\ref{sec:shores} are one-sided: they constrain a shore in
isolation. We now add a genuinely two-sided constraint, obtained by playing the
cut matrices of Section~\ref{sec:cuts} against vertex-criticality, and use it to
exclude an entire structural class of shores at \emph{every} size.

Throughout, $A$ is a shore of a nontrivial $6$-edge-cut in a $6$-regular target
$G$, $H=G[A]$, and $B=V(G)\setminus A$ is the partner shore. Suppose the
deficiency of $A$ is \emph{concentrated}: $b(p)=b(\ell)=3$ for two vertices
$p\ne\ell$ and $b(v)=0$ otherwise. Write $T_p$ and $T_\ell$ for the sets of
endpoints in $B$ of the three cut edges at $p$ and at $\ell$ respectively. Call a
pair $(c_p,c_\ell)\in\{1,2,3\}^2$ \emph{realizable} if some proper $3$-colouring
$\alpha$ of $H$ has $\alpha(p)=c_p$ and $\alpha(\ell)=c_\ell$, and call $H$
\emph{pair-rich} if all nine pairs are realizable.

\begin{lemma}[pair rigidity]\label{lem:pairrigid}
A concentrated shore of a target is never pair-rich.
\end{lemma}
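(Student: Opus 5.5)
The plan is to play the realizability of a single well-chosen pair against Lemma~\ref{lem:Dpi}, using one vertex-deleted colouring of $G$ that avoids the cut entirely. The point of concentration is that the only vertices of $A$ incident to cut edges are $p$ and $\ell$. Hence, for colourings $\alpha$ of $H$ and $\beta$ of $G[B]$, the set of monochromatic cut edges depends on $\alpha$ only through the pair $(\alpha(p),\alpha(\ell))$.

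\textbf{Step 1 (a deletion avoiding the cut).} Pick a vertex $x$ whose deletion leaves all six cut edges present. Any $x\in A\setminus\{p,\ell\}$ works; such $x$ exists because $a\ge 9$ by Theorem~\ref{thm:shore8}, and in any case $a\ge 3$ since $e(H)=3a-3$. Alternatively one may take any $x\in B\setminus(T_p\cup T_\ell)$ when that set is nonempty, but deleting inside $A$ is cleaner. By vertex-criticality there is a proper $3$-colouring $\varphi$ of $G-x$. Since $x\notin\{p,\ell\}$ and $x\in A$, the restriction $\beta=\varphi|_B$ is a proper $3$-colouring of all of $G[B]$. Moreover, every cut edge has both endpoints in $G-x$, so no cut edge is monochromatic under $\varphi$.

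\textbf{Step 2 (transplant via pair-richness).} Set $c_p=\varphi(p)$ and $c_\ell=\varphi(\ell)$. Since $H$ is assumed pair-rich, the pair $(c_p,c_\ell)$ is realizable, so there is a proper colouring $\alpha$ of all of $H$ with $\alpha(p)=c_p$ and $\alpha(\ell)=c_\ell$. The colouring $\alpha$ may differ from $\varphi$ elsewhere on $A$, and it also colours $x$.

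\textbf{Step 3 (contradiction).} Form the cut matrix $M=M(\alpha,\beta)$ and take $\pi=\mathrm{id}$. Every cut edge has its $A$-endpoint in $\{p,\ell\}$, and $\alpha$ agrees with $\varphi$ there. So the monochromatic cut edges under $(\alpha,\beta)$ are exactly those under $\varphi$, and there are none. Thus $D_{\mathrm{id}}(M)=0$. Indeed $\alpha\cup\beta$ would then be a proper $3$-colouring of $G$, which Lemma~\ref{lem:Dpi} forbids.

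I do not expect a serious obstacle. The only point requiring care is Step 1: the deleted vertex must not be $p$ or $\ell$, and it must not lie in $B$ adjacent to the cut. Otherwise either $\beta$ fails to colour all of $B$ or some cut edge is unconstrained by $\varphi$. Choosing $x\in A\setminus\{p,\ell\}$ handles both issues at once, and this is exactly why concentration of the deficiency on two vertices is what makes the argument go through.
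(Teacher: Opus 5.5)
Your proof is correct and is essentially the paper's argument. The paper first shows that every $3$-colouring of $G[B]$ makes $T_p$ or $T_\ell$ rainbow (by pair-richness and $\chi(G)=4$), and then observes that the restriction to $B$ of a $3$-colouring of $G-v$, with $v\in A\setminus\{p,\ell\}$, cannot do so. You instead glue $\varphi|_B$ directly to the $\alpha$ realizing $(\varphi(p),\varphi(\ell))$, which collapses those two steps into one.

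There is one slip. Your parenthetical alternative of deleting some $x\in B\setminus(T_p\cup T_\ell)$ does not work: $\varphi|_B$ then leaves $x$ uncoloured, so no colouring of $G$ results. Your main argument never uses this alternative.
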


\begin{proof}
Suppose $H$ is pair-rich. First, every proper $3$-colouring $\beta$ of $G[B]$
makes $T_p$ or $T_\ell$ rainbow. Otherwise pick a colour
$c_p\notin\beta(T_p)$ and a colour $c_\ell\notin\beta(T_\ell)$; pair-richness
supplies $\alpha$ with $\alpha(p)=c_p$, $\alpha(\ell)=c_\ell$, and since the only
cut edges leave $p$ and $\ell$, the union $\alpha\cup\beta$ is a proper
$3$-colouring of $G$, contradicting $\chi(G)=4$.

Now take any $v\in A\setminus\{p,\ell\}$ (such $v$ exists: $a=2$ would force
$e(H)=3$ in a simple graph on two vertices). By vertex-criticality $G-v$ has a
proper $3$-colouring; restrict it to the intact $G[B]$ to get $\beta$. One of
$T_p,T_\ell$ is rainbow under $\beta$; the corresponding vertex ($p$ or $\ell$,
both still present in $G-v$) sees all three colours on its cut neighbours and on
its $H$-neighbours can avoid none, since all three colours already appear among
its three cut neighbours --- it cannot be coloured. Contradiction.
\end{proof}

\begin{lemma}[anti-diagonal exclusion]\label{lem:antidiag}
A concentrated shore of a target never satisfies
``$\alpha(p)\ne\alpha(\ell)$ for every proper $3$-colouring $\alpha$ of $H$''.
\end{lemma}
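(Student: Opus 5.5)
The plan is to let the cut-matrix classification (Theorem~\ref{thm:cutmatrix}) do the two-sided work, and then finish with the same vertex-deletion argument as in the second half of Lemma~\ref{lem:pairrigid}. Suppose, for contradiction, that $\alpha(p)\ne\alpha(\ell)$ for every proper $3$-colouring $\alpha$ of $H$. Such colourings exist because $H\subseteq G-x$ for any $x\in B$. Proper $3$-colourings $\beta$ of $G[B]$ exist for the same reason.

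First I fix an arbitrary pair $(\alpha,\beta)$ and read off the cut matrix $M(\alpha,\beta)$. The only vertices of $A$ with $b(v)>0$ are $p$ and $\ell$, each with $b=3$, and they receive distinct colours under $\alpha$. So the row-sum vector of $M$ is a permutation of $(3,3,0)$. By Theorem~\ref{thm:cutmatrix} we have $m_{ij}=(R_i+C_j-2)/3$. The column sums must be pairwise congruent mod $3$ and sum to $6$, and each row with $R_i=3$ gives $m_{ij}=(1+C_j)/3$. Hence the row of colour $\alpha(p)$ is $(1,1,1)$, i.e.\ $M$ is of the second displayed type. Equivalently, I can just check which of the five types have two rows summing to $3$: only the $(1,1,1;1,1,1;0,0,0)$ type does.

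Because $G$ is simple, the three cut edges at $p$ go to three distinct vertices of $T_p$. The row of $\alpha(p)$ collects exactly these three edges, since $\ell$ has a different colour. That row being $(1,1,1)$ means $\beta$ makes $T_p$ rainbow, and likewise $T_\ell$. Since $\alpha$ always exists, this holds for \emph{every} proper $3$-colouring $\beta$ of $G[B]$.

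Finally, I choose $v\in A\setminus\{p,\ell\}$. Such a vertex exists because $a=2$ would give $e(H)=3$ on two vertices, as noted in Lemma~\ref{lem:pairrigid}. By vertex-criticality, $G-v$ has a proper $3$-colouring, and its restriction $\beta$ to the untouched $G[B]$ makes $T_p$ rainbow. But $p$ is still present in $G-v$ and is adjacent to all of $T_p$, so it sees all three colours and cannot be coloured. This is a contradiction.

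The only step needing care is the deduction that the row of $\alpha(p)$ is exactly $(1,1,1)$. This rests on $p$ and $\ell$ lying in different colour classes, which is precisely the hypothesis being refuted. So I expect no real obstacle beyond verifying that the $(3,3,0)$ row-sum case forces the second matrix type.
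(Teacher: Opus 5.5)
Your proof is correct and follows the paper's own argument. The hypothesis forces row sums $(3,3,0)$, so Theorem~\ref{thm:cutmatrix} forces both nonzero rows to be $(1,1,1)$, which makes $T_p$ and $T_\ell$ rainbow under every $\beta$; restricting a $3$-colouring of $G-v$ to $G[B]$ then leaves $p$ uncolourable. You also make explicit a few details the paper leaves implicit: why $C=(2,2,2)$, why $|T_p|=3$, and why the row of $\alpha(p)$ sees only $p$'s cut edges.
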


\begin{proof}
Suppose it does. Then for every $\alpha$ the row-sum vector of the cut matrix
$M(\alpha,\beta)$ is a permutation of $(3,3,0)$, and by
Theorem~\ref{thm:cutmatrix} the only matrices with these row sums are the
permutations of the second type, with both nonzero rows equal to $(1,1,1)$:
under \emph{every} proper $3$-colouring $\beta$ of $G[B]$, both $T_p$ and
$T_\ell$ are rainbow. As in Lemma~\ref{lem:pairrigid}, restricting a
$3$-colouring of $G-v$ (any $v\in A\setminus\{p,\ell\}$) to $G[B]$ now blocks
both $p$ and $\ell$ simultaneously --- contradiction.
\end{proof}

Since the set of realizable pairs is invariant under simultaneous colour
permutations, whose orbits on $\{1,2,3\}^2$ are the diagonal and the
off-diagonal, Lemmas~\ref{lem:pairrigid} and~\ref{lem:antidiag} leave exactly
one possibility.

\begin{corollary}[diagonal rigidity]\label{cor:diag}
In every concentrated shore of a target, $\alpha(p)=\alpha(\ell)$ for
\emph{every} proper $3$-colouring $\alpha$ of $H$; equivalently, $p\ell\notin
E(H)$ and $H+p\ell$ is not $3$-colourable.
\end{corollary}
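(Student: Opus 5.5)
The plan is to read Corollary~\ref{cor:diag} as the combination of Lemmas~\ref{lem:pairrigid} and~\ref{lem:antidiag}, organised by the symmetry of the set of realizable pairs. Let $S\subseteq\{1,2,3\}^2$ be the set of realizable pairs. Since $H$ is $3$-colourable, $S\neq\emptyset$. The key observation is that if $\alpha$ is a proper colouring of $H$ and $\sigma\in S_3$, then $\sigma\circ\alpha$ is also proper. Hence $S$ is invariant under the diagonal action $(c_p,c_\ell)\mapsto(\sigma(c_p),\sigma(c_\ell))$.

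Next I identify the orbits of this action. $S_3$ acts transitively on the three diagonal pairs $(c,c)$. It also acts transitively on the six off-diagonal pairs, because it is sharply $2$-transitive on ordered pairs of distinct colours. So $S$ is a union of orbits, giving exactly three nonempty possibilities: the diagonal, the off-diagonal, or all nine pairs.

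Then I eliminate two of them. If $S$ is all nine pairs, $H$ is pair-rich, which Lemma~\ref{lem:pairrigid} forbids. If $S$ is exactly the off-diagonal, then $\alpha(p)\neq\alpha(\ell)$ for every proper $\alpha$, which Lemma~\ref{lem:antidiag} forbids. Therefore $S$ is exactly the diagonal, so $\alpha(p)=\alpha(\ell)$ for every proper $3$-colouring $\alpha$ of $H$.

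Finally I prove the stated equivalence.
\begin{itemize}
\item[($\Rightarrow$)] Suppose $\alpha(p)=\alpha(\ell)$ for every proper $\alpha$. Some proper $\alpha$ exists, so $p\ell\notin E(H)$, since an edge $p\ell$ would force $\alpha(p)\neq\alpha(\ell)$. Moreover, any proper $3$-colouring of $H+p\ell$ restricts to a proper colouring of $H$ with $\alpha(p)\ne\alpha(\ell)$, which is impossible. So $H+p\ell$ is not $3$-colourable.
\item[($\Leftarrow$)] Suppose $p\ell\notin E(H)$ and $H+p\ell$ is not $3$-colourable. A proper colouring of $H$ with $\alpha(p)\neq\alpha(\ell)$ would be a proper colouring of $H+p\ell$, a contradiction. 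So every proper $\alpha$ has $\alpha(p)=\alpha(\ell)$.
\end{itemize}

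The only step that needs care is the orbit claim, specifically the transitivity on the off-diagonal, together with the nonemptiness of $S$. Nonemptiness holds because $H$ lies in $G-x$ for any $x\in B$, and $B\neq\emptyset$ since the cut is nontrivial.
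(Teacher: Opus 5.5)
Your proposal is correct and follows the paper's own argument. The paper likewise uses the invariance of the realizable pairs under simultaneous colour permutations, splits them into the diagonal and off-diagonal orbits, and eliminates the full set and the off-diagonal set by Lemmas~\ref{lem:pairrigid} and~\ref{lem:antidiag}; you additionally spell out the nonemptiness of the realizable set and the routine equivalence with $p\ell\notin E(H)$ and non-$3$-colourability of $H+p\ell$, which the paper leaves implicit.
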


\begin{theorem}[C]\label{thm:C}
In a $6$-regular $(4,1)$-graph, no shore of a nontrivial $6$-edge-cut induces a
bipartite graph.
\end{theorem}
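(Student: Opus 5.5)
The plan is to exploit the large supply of proper $3$-colourings of a bipartite graph, combined with Corollary~\ref{cor:rowsum}. The aim is to force the deficiency to be concentrated, and then to contradict Corollary~\ref{cor:diag}.

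\textbf{Setup.} Suppose $H=G[A]$ is bipartite. By Lemma~\ref{lem:basic}, $H$ is connected, so it has a unique bipartition $A=X\sqcup Y$. Since $X$ and $Y$ are independent, every map sending $X$ into a colour set $S$ and $Y$ into a set disjoint from $S$ is a proper $3$-colouring. In particular, the following are all proper colourings:
\begin{itemize}
\item[(i)] $X\mapsto 1$, $Y\mapsto 2$;
\item[(ii)] as in (i), but one chosen vertex of $Y$ recoloured $3$;
\item[(iii)] as in (i), but one chosen vertex of $X$ recoloured $3$.
\end{itemize}
Write $b(X)=\sum_{v\in X}b(v)$ and $b(Y)=\sum_{v\in Y}b(v)$, so $b(X)+b(Y)=6$.

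\textbf{Key steps.} Under colouring (i) the row-sum vector is $(b(X),b(Y),0)$. By Corollary~\ref{cor:rowsum} it must be a permutation of $(6,0,0)$ or $(3,3,0)$, since these are the only allowed vectors with a zero entry. Up to swapping $X$ and $Y$, this leaves two cases.

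In the case $b(X)=6$, $b(Y)=0$, recolour a single $x\in X$ as in (iii). The row-sum vector becomes $(6-b(x),0,b(x))$. It still has a zero entry, so $b(x)\in\{0,3,6\}$, and Lemma~\ref{lem:basic} ($b\le 5$) gives $b(x)\in\{0,3\}$. Hence exactly two vertices $p,\ell\in X$ have $b=3$, and the shore is concentrated. Colouring $X\mapsto 1$, $Y\mapsto 2$ gives $\alpha(p)=\alpha(\ell)$. Recolouring only $p$ to $3$ is still proper and gives $\alpha(p)\ne\alpha(\ell)$. This contradicts Corollary~\ref{cor:diag}; indeed it contradicts Lemma~\ref{lem:antidiag} or Lemma~\ref{lem:pairrigid} directly, according to which pairs are realizable.

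In the case $b(X)=b(Y)=3$, recolour a deficient $y\in Y$ as in (ii). The vector becomes $(3,\,3-b(y),\,b(y))$. If $0<b(y)<3$, all three entries are positive and one equals $3$, which is impossible since the only all-positive allowed vectors are $(4,1,1)$ and $(2,2,2)$. Hence $b(y)=3$, and symmetrically every deficient vertex of $X$ has $b=3$. So the deficiency is concentrated at some $p\in X$ and $\ell\in Y$. Colouring (i) then gives $\alpha(p)=1\ne 2=\alpha(\ell)$, contradicting Corollary~\ref{cor:diag}. (Equivalently, $p\ell\notin E(H)$ must hold there, but $H+p\ell$ is still bipartite and hence $3$-colourable.)

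\textbf{Main obstacle.} There is essentially none once the recolouring trick is in place. The only care needed is to check that the single-vertex recolourings (ii) and (iii) really are proper, which holds because the two sides of the bipartition are independent, and to run through the allowed vectors in Corollary~\ref{cor:rowsum} exhaustively. The substantive input is Corollary~\ref{cor:diag}, already established.
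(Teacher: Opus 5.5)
Your proof is correct and follows the paper's route: you feed the bipartition colouring and one-side recolourings into Corollary~\ref{cor:rowsum} to force concentrated deficiency (single-vertex recolourings suffice where the paper uses arbitrary subsets), and then exhibit a colouring with $\alpha(p)\ne\alpha(\ell)$, which contradicts diagonal rigidity. The only difference is cosmetic: you invoke Corollary~\ref{cor:diag} directly, whereas the paper splits into the nonadjacent case (pair-rich, Lemma~\ref{lem:pairrigid}) and the adjacent case (Lemma~\ref{lem:antidiag}).
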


\begin{proof}
Let $H$ be bipartite with parts $X\cup Y$. First, the deficiency of $H$ must be
concentrated. Indeed, colouring $X$ with colour $1$ and an arbitrary subset
$T\subseteq Y$ with colour $2$, $Y\setminus T$ with colour $3$, is always proper;
by Corollary~\ref{cor:rowsum} every subset sum $t$ of the $Y$-side deficiencies
must keep $\bigl(b(X),t,b(Y)-t\bigr)$ among the allowed vectors. Combined with
the plain bipartition colouring (which forces
$(b(X),b(Y))\in\{(6,0),(3,3),(0,6)\}$) and the same argument on the $X$ side,
the only solutions with all $b(v)\le 5$ are: a single vertex of deficiency $3$
on each side, or two vertices of deficiency $3$ on one side.

If $p,\ell$ are nonadjacent, $H$ is pair-rich: for $p,\ell$ in the same part
$X$, colour $p$ and $X\setminus\{\ell\}$ with $1$, $\ell$ with $2$, $Y$ with $3$
(off-diagonal pairs; the bipartition colouring gives diagonal pairs); for
$p\in X$, $\ell\in Y$ nonadjacent, colour $p,\ell$ with $1$, $X\setminus\{p\}$
with $2$, $Y\setminus\{\ell\}$ with $3$ (diagonal pairs; the bipartition
colouring gives off-diagonal pairs). Colour permutations complete all nine
pairs, contradicting Lemma~\ref{lem:pairrigid}.

If $p,\ell$ are adjacent (necessarily in different parts), every colouring has
$\alpha(p)\ne\alpha(\ell)$, contradicting Lemma~\ref{lem:antidiag}.
\end{proof}

\begin{remark}[a size barrier for one-sided filters]\label{rem:pg}
Theorem~\ref{thm:C} is not a luxury: the one-sided filters of
Section~\ref{sec:shores} genuinely fail to see large bipartite shores. The
incidence graph of the projective plane $\mathrm{PG}(2,5)$ ($31$ points, $31$
lines, $6$-regular bipartite, girth~$6$, $62$ vertices) has the property that
for \emph{every} vertex $v$ there is a proper $3$-colouring of the graph minus
$v$ whose colour counts on $N(v)$ are exactly $(2,2,2)$ --- verified by two
independent implementations, with point- and line-transitivity reducing the
check to two orbits. Consequently the boundary-shortfall lemma
(Lemma~\ref{lem:shortfall}) excludes \emph{no} vertex of suitable deficiency-$6$
modifications of this graph: one can delete three edges at a point $p$ and three
at a line $\ell$ and re-match the six unit deficits along non-incidences,
producing concentrated candidates that pass all the one-sided filters of
Section~\ref{sec:shores}. Theorem~\ref{thm:C} excludes them all the same. This
also shows that the analogue of the boundary-shortfall mechanism for whole
graphs (``every connected $6$-regular $3$-colourable graph has a vertex with no
balanced deletion colouring''), true exhaustively for $n\le 15$, fails at
$n=62$.
\end{remark}

\section{Computations, certificates, and machine-checked lemmas}
\label{sec:computations}

\paragraph{Theorem A.} All $6$-regular graphs on $7\le n\le 15$ vertices were
enumerated with \texttt{geng -d6 -D6} (nauty 2.8.9) and classified by a C\texttt{++}
backtracking $3$-colourability checker into: $3$-colourable; $\chi\ge 4$ but not
vertex-critical; $4$-vertex-critical with a critical edge; target. (A $6$-regular
graph requires $n\ge 7$, with $K_7$ the only one on $7$ vertices.) Counts:

\begin{center}
\begin{tabular}{rrrrrr}
\toprule
$n$ & total & $3$-colourable & not vertex-critical & $4$-VC with critical edge &
target\\
\midrule
7  & 1 & 0 & 1 & 0 & 0\\
8  & 1 & 0 & 1 & 0 & 0\\
9  & 4 & 1 & 3 & 0 & 0\\
10 & 21 & 1 & 20 & 0 & 0\\
11 & 266 & 3 & 263 & 0 & 0\\
12 & 7\,849 & 50 & 7\,799 & 0 & 0\\
13 & 367\,860 & 849 & 367\,010 & 1 & 0\\
14 & 21\,609\,301 & 42\,667 & 21\,566\,634 & 0 & 0\\
15 & 1\,470\,293\,676 & \multicolumn{2}{c}{(combined, see text)} & 0 & 0\\
\bottomrule
\end{tabular}
\end{center}

The $n=14$ run was executed twice with different residue-class chunkings
(mod~$110$ and mod~$73$) with identical aggregates, and the total matches the
known count of $6$-regular graphs on $14$ vertices. The $n=15$ run used the
neighbourhood-bipartiteness prefilter of Lemma~\ref{lem:nbhd} (applicable since the
graphs are $6$-regular, so $\Delta\le 6$), which eliminates $\approx 90\%$ of
candidates; the prefilter implementation was validated against the unfiltered
classifier on all of $n=11,12,13$ with identical results, and every residue class
was closed with a generator-side completeness guard (the generator's terminating
count must equal the classifier's count). The unique $n=13$ graph is
\begin{center}
\texttt{L?bFFbw\textasciitilde B\{FwFw}
\end{center}
Its $13$ critical edges form a Hamilton cycle. Across its $13$ vertex-deleted
subgraphs it admits exactly $78$ proper $3$-colourings (as maps to $\{1,2,3\}$,
not modulo $S_3$), and \emph{none} of them splits the open neighbourhood
$2+2+2$: every colouring leaves a singleton class in $N(v)$, so
Lemma~\ref{lem:singleton} predicts critical edges at every vertex; the $156$
singleton predictions hit exactly the $13$ critical edges (no false positives).
Thus the unique smallest $6$-regular $4$-vertex-critical graph fails the target
conditions everywhere locally, not marginally.

\paragraph{Theorem B.} Candidate shores were generated by
\texttt{geng -c -D6 $a$ $E$:$E$} with $E=3a-3$; large cases ran as residue
classes with a generator-side completeness guard (each class must end with
\texttt{geng}'s \texttt{>Z} terminator line, and the generated count must equal
the classifier's count). The filter battery was implemented twice independently
(C\texttt{++} and Python) with identical counts at $a=9,10,11$; the $a=9$ kill was
additionally brute-forced. Per-graph certificates (graph6 plus the failed filter,
and for shortfall kills the killing vertex) are included for $a\le 11$ in the
ancillary files.

\paragraph{Machine-checked lemmas.} The following are verified in Lean~4 /
Mathlib (no \texttt{sorry}, no \texttt{native\_decide}; axioms at most
\texttt{propext}, \texttt{Classical.choice}, \texttt{Quot.sound}):
the recolouring core of Lemma~\ref{lem:singleton} (axiom-free); the count of
$21$ in Theorem~\ref{thm:cutmatrix}, via an enumeration of the $3003$ weak
compositions of $6$ into $9$ parts together with a proved completeness lemma for
the enumeration, and a bridge lemma stating the matrix form with
$\forall\pi\in S_3$ diagonal bounds; the numeric inequality
$\lfloor a^2/3\rfloor<3a-3$ for $2\le a\le 7$ used in Theorem~\ref{thm:shore8};
and the row-sum forcing used in Theorem~\ref{thm:C} (among the $21$ valid cut
matrices, every one with row-sum vector $(3,3,0)$ has both nonzero rows $(1,1,1)$,
and every one with $(6,0,0)$ has nonzero row $(2,2,2)$). Each cited statement is
machine-checked; computations not on this list (the graph enumerations) are not
formalised.

\paragraph{Reproducibility.} The ancillary files are self-contained and do not
depend on any external repository. They include: the C\texttt{++} classifiers and
the shore-filter battery (with the independent Python reimplementation), the Lean~4
source of the machine-checked lemmas, per-graph certificates for $a\le 11$, the
graph6 string of $G_{13}$, the $\mathrm{PG}(2,5)$ verification scripts, a
\texttt{README} giving the exact \texttt{geng} (nauty~2.8.9) commands and residue
chunkings used for each row of each table, and a \texttt{SHA256SUMS} manifest of all
files. All generation uses nauty~2.8.9 \texttt{geng}; each enumeration is closed by
requiring \texttt{geng}'s terminating \texttt{>Z} count to equal the classifier's
count.

\section{Concluding remarks}

Theorem~B turns the Skottova--Steiner edge-connectivity threshold into an exact
small-cut impossibility statement, but it says nothing about
super-$6$-edge-connected candidates, which is where Problem~5.2 now lives: a
$6$-regular target must have $n\ge 15$ (Theorem~A), and if $n\le 29$ it has no
nontrivial $6$-edge-cut at all (Theorem~B). The shore-exclusion machine is
open-ended --- each further size $a$ is a finite computation; the $a=14$ case
($119{,}236{,}283{,}370$ candidates, every residue class closed with a
generator-side completeness guard) completed in under a day on a workstation --- but we
do not currently see an argument that it terminates for all $a$: the row-sum
filter can be neutralized by small colour-forcing gadgets, and the burden falls
entirely on the boundary-shortfall lemma. Proving that
Corollary~\ref{cor:rowsum} forces enough colour rigidity for
Lemma~\ref{lem:shortfall} to fail at some full-degree vertex, for every $a$,
would upgrade Theorem~B to: \emph{every $6$-regular $(4,1)$-graph is
super-$6$-edge-connected}. We leave this as the natural next question.

\end{document}